\theoremstyle{definition}
\newtheorem{theorem}{Theorem}
\newtheorem{lemma}{Lemma}
\newtheorem*{remark}{Remark}
\newtheorem*{examples}{Examples}
\begin{document}
\title[Lower bounds for the principal genus]{Lower bounds for the principal genus\\ of definite binary quadratic forms}
\author{Kimberly Hopkins}
\address{Department of Mathematics\\University of Texas at Austin\\Austin, TX 78712-0257}
\email{khopkins@math.utexas.edu}
\author{Jeffrey Stopple}
\address{Department of Mathematics\\University of California, Santa Barbara\\Santa Barbara, CA 93106-3080}
\email{stopple@math.ucsb.edu}
\subjclass{11M20,11R29}
\begin{abstract}
We apply Tatuzawa's version of Siegel's theorem to derive two lower bounds on the size of the principal genus of positive definite binary quadratic forms.
%The first bound is better than the second on certain intervals of discriminants, but the second bound is better as the discriminant goes to infinity.
\end{abstract}

\maketitle

\subsection*{Introduction}  Suppose $-D<0$ is a fundamental discriminant.  By genus theory we have an exact sequence for the class group $\mathcal C(-D)$ of positive definite binary quadratic forms:
\[
 \mathcal P(-D)\overset{\text{def.}}=\mathcal C(-D)^2 \hookrightarrow \mathcal C(-D) \twoheadrightarrow \mathcal C(-D)/ \mathcal C(-D)^2 \simeq (\mathbb Z/2)^{g-1},
\]
where $D$ is divisible by $g$ primary discriminants (i.e., $D$ has $g$ distinct prime factors).
Let $p(-D)$ denote the cardinality of the principal genus $\mathcal P(-D)$.  The genera of forms are the cosets of $\mathcal C(-D)$ modulo the principal genus, and thus $p(-D)$ is the number of classes of forms in each genus.  The study of this invariant of the class group is as old as the study of the class number $h(-D)$ itself.  Indeed, Gauss wrote in \cite[Art. 303]{Gauss}
\begin{quote}. . . Further, the series of [discriminants] corresponding to the same given classification (i.e. the given number of both genera and classes) always seems to terminate with a finite number . . . However, \emph{rigorous} proofs of these observations seem to be very difficult.
\end{quote}

Theorems about $h(-D)$ have usually been closely followed with an analogous result for $p(-D)$.  When Heilbronn \cite{He} showed that $h(-D)\to\infty$ as $D\to\infty$, Chowla \cite{C} showed that $p(-D)\to\infty$ as $D\to\infty$.
%Chowla's result appeared in the same journal issue as Heilbronn's, and his enthusiasm to appear in print detracted from the exposition - the crucial estimate appears in a footnote on the last page without proof.
An elegant proof of Chowla's theorem is given by Narkiewicz in \cite[Prop 8.8 p. 458]{N}.

Similarly, the Heilbronn-Linfoot result \cite{HL} that $h(-D)>1$ if $D>163$, with at most one possible exception was matched by Weinberger's result \cite{W} that $p(-D)>1$ if $D>5460$ with at most one possible exception.  On the other hand, Oesterl\'{e}'s \cite{Oes} exposition of the Goldfeld-Gross-Zagier bound for $h(-D)$ already contains the observation that the result was not strong enough to give any information about $p(-D)$.

In \cite{T} Tatuzawa proved a version of Siegel's theorem: for every $\varepsilon$ there is an explicit constant $C(\varepsilon)$ so that
\[
h(-D)>C(\varepsilon)D^{1/2-\varepsilon}
\]
with at most one exceptional discriminant $-D$.  This result has never been adapted to the study of the principal genus.  It is easily done; the proofs are not difficult so it is worthwhile filling this gap in the literature.   We present two versions.  The first version contains a transcendental function (the Lambert $W$ function discussed below).  The second version gives, for each $n\ge4$, a bound which involves only elementary functions. For each fixed $n$ the second version is stronger on an interval $I=I(n)$ of $D$ , but the first is stronger as $D\to\infty$. The second version has the added advantage that it is easily computable. 
  (N.B. The constants in Tatuzawa's result have been improved in \cite{Ho} and \cite{JL}; these could be applied at the expense of slightly more complicated statements.)

\subsection*{Notation}
We will always assume that $g\ge 2$, for if $g=1$ then $-D=-4,-8$, or $-q$ with $q\equiv 3\bmod 4$ a prime. In this last case $p(-q)=h(-q)$ and Tatuzawa's theorem \cite{T} applies directly.

\section*{First version}
\begin{lemma}\label{Lemma1}  If $g\ge 2$,
\[
\log(D)>g\log(g).
\]
\end{lemma}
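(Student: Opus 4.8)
The plan is to convert $\log D>g\log g$ into the equivalent multiplicative inequality $D>g^{g}$ and to bound $D$ from below using the arithmetic of fundamental discriminants. First I would record that, since $-D$ is a fundamental discriminant, every odd prime dividing $D$ contributes a factor of itself, while the prime $2$ (when it divides $D$) contributes a factor $\ge 4$, because $-D\equiv 0\pmod 4$ forces $4\mid D$. Writing $p_{1}<p_{2}<\cdots$ for the primes in increasing order, a short case distinction according to whether or not $2\mid D$ shows that in every case
\[
D\ \ge\ 2\prod_{i=1}^{g}p_{i},
\]
the smallest situation being that in which the primes dividing $D$ are the first $g$ primes and $2$ contributes exactly $4$. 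It therefore suffices to prove $2\prod_{i=1}^{g}p_{i}>g^{g}$.

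I would establish this by induction on $g$. The step from $g$ to $g+1$ reduces, after multiplying the inductive hypothesis by $p_{g+1}$, to the inequality $p_{g+1}\,g^{g}\ge (g+1)^{g+1}$, i.e. $p_{g+1}\ge (g+1)(1+1/g)^{g}$; since $(1+1/g)^{g}<e$ it is enough to know $p_{g+1}\ge e(g+1)$. This follows for $g+1\ge 16$ from the standard bound $p_{n}\ge n\log n$ (as $\log n>e$ there), and the finitely many remaining indices $10\le g+1\le 15$ are checked by inspection, so the step is valid for all $g\ge 9$. The base cases $2\le g\le 9$ are then dispatched by direct computation of $2\prod_{i=1}^{g}p_{i}$ against $g^{g}$; for instance $2\cdot 2\cdot 3=12>4$ at one end and $2\cdot(2\cdot 3\cdots 23)=446185740>387420489=9^{9}$ at the other.

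The point to watch, and the one genuinely responsible for the shape of the argument, is that the inequality is delicate: the primorial $\prod_{i\le g}p_{i}=e^{\theta(p_{g})}$ grows essentially like $g^{g}$, so it only barely exceeds $g^{g}$, and in fact the bare product of the first $g$ primes is already \emph{smaller} than $g^{g}$ for $g=4$ and again near $g=10$. The extra factor $2$ coming from the prime $2$ contributing $4$ rather than $2$ to a fundamental discriminant is thus not a convenience but a necessity, and it is also why the induction cannot begin until $p_{g+1}$ has overtaken $e(g+1)$, which forces the separate verification of the small cases. The main obstacle is therefore not any single hard estimate but maintaining enough slack through the borderline range $g\approx 4$–$15$, which the factor $2$ together with the finite check supplies.
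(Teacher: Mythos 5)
Your proof is correct, and it takes a genuinely different route from the paper's. The paper passes immediately to logarithms: it sorts the primary-discriminant factors $q_i$ of $D$, bounds $q_i\ge p_i$, reduces the lemma to $\theta(p_g)>g\log g$, and establishes that for $g>13$ from the Rosser--Schoenfeld explicit estimates $\theta(x)>x(1-1/\log x)$ and $p_g>g(\log g+\log\log g-3/2)$, checking $g\le 13$ directly. You instead keep the inequality multiplicative, use the fact that an even fundamental discriminant is divisible by $4$ to get $D\ge 2\prod_{i\le g}p_i$, and prove $2\prod_{i\le g}p_i>g^g$ by induction, needing only the single cruder bound $p_n>n\log n$ together with a finite check through $g\le 9$ (and of $p_n\ge en$ for $10\le n\le 15$). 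Your route buys something real: as you observe, the bare primorial $\prod_{i\le g}p_i$ is \emph{less} than $g^g$ at $g=4$ and again at $g=10,11,12$, so the paper's intermediate inequality $\theta(p_g)>g\log g$ is actually false for those $g$, and its ``verify directly for $g=2,\ldots,13$'' must implicitly rely on the extra factor of $2$ that your argument makes explicit; your version is therefore not only more elementary in its inputs but also more robust exactly where the inequality is tight. The price is a longer stretch of hand-checked cases and the bookkeeping around the threshold $p_{g+1}\ge e(g+1)$, whereas the paper's asymptotic argument disposes of all large $g$ in one line. (One cosmetic point: your ``smallest situation'' $D=4\cdot 3\cdot 5\cdots p_g$ need not itself be a fundamental discriminant for every $g$, but that does not matter --- your case distinction gives the lower bound $D\ge 2\prod_{i\le g}p_i$ for every admissible $D$, which is all you use.)
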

\begin{proof}  Factor $D$ as $q_1,\ldots q_g$ where the $q_i$ are (absolute values) of primary discriminants, i.e. $4$, $8$, or odd primes.  Let $p_i$ denote the $i$th prime number, so we have
\begin{equation}\label{Eq:inequality}
\log(D)=\sum_{i=1}^g \log(q_i)\ge \sum_{i=1}^g \log(p_i)\overset{\text{def.}}=\theta(p_g).
\end{equation}
By \cite[(3.16) and (3.11)]{RS}, we know that Chebyshev's function $\theta$ satisfies $\theta(x)>x(1-1/\log(x))$ if $x>41$, and that
\[
p_g>g(\log(g)+\log(\log(g))-3/2).
\]
After substituting $x=p_g$ and a little calculation, this gives $\theta(p_g)>g\log(g)$ as long as $p_g>41$, i.e. $g>13$.  For $g=2,\ldots, 13$, one can easily verify the inequality directly.
\end{proof}

Let $W(x)$ denote the Lambert $W$-function, that is, the inverse function of $f(w)=w\exp(w)$ (see  \cite{E}, \cite[p. 146 and p. 348, ex 209]{PS}).  For $x\ge0$ it is positive, increasing, and concave down.  The Lambert $W$-function is also sometimes called the product log, and is implemented as \texttt{ProductLog} in \emph{Mathematica}.

\begin{theorem}\label{Theorem2}  If $0<\varepsilon<1/2$ and $D>\max(\exp(1/\varepsilon),\exp(11.2))$, then with at most one exception
\[
p(-D)>\frac{1.31}{\pi}\varepsilon D^{1/2-\varepsilon-\log(2)/W(\log(D))}.
\]
\end{theorem}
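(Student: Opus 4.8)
The plan is to reduce the estimate for $p(-D)$ to the known lower bound for $h(-D)$ by means of the exact sequence in the Introduction, which exhibits the principal genus as a subgroup of index $2^{g-1}$ in the class group, so that $p(-D) = h(-D)/2^{g-1}$. A lower bound for $p(-D)$ will therefore follow from a lower bound for $h(-D)$ combined with an \emph{upper} bound for $2^{g-1}$. For the first ingredient I would use Tatuzawa's theorem in the form quoted in the Introduction: under the present hypotheses $0<\varepsilon<\foh$ and $D>\max(\exp(1/\varepsilon),\exp(11.2))$, which are exactly Tatuzawa's own conditions, one has $L(1,\chi_{-D}) > 0.655\,\varepsilon\,D^{-\varepsilon}$ with at most one exceptional $D$. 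Feeding this into the Dirichlet class number formula $h(-D) = (\sqrt{D}/\pi)\,L(1,\chi_{-D})$, valid since $D>4$, gives
\[
h(-D) > \frac{0.655}{\pi}\,\varepsilon\,D^{1/2-\varepsilon}.
\]

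The heart of the matter is the upper bound for $2^{g-1}$, and this is where Lemma~\ref{Lemma1} and the Lambert $W$-function are used. Lemma~\ref{Lemma1} gives $g\log(g) < \log(D)$, and since $t\mapsto t\log(t)$ is increasing for $t\ge 1$ I would invert this inequality explicitly. Writing $g = \exp(u)$ turns the equation $g\log(g) = y$ into $u\exp(u) = y$, whose solution is $u = W(y)$; hence $g\log(g)=\log(D)$ has solution $g = \exp(W(\log(D))) = \log(D)/W(\log(D))$, using the defining identity $W(y)\exp(W(y)) = y$. Monotonicity then yields
\[
g < \frac{\log(D)}{W(\log(D))}.
\]
Exponentiating, $2^{g} = \exp(g\log(2)) < \exp\left(\frac{\log(2)\log(D)}{W(\log(D))}\right) = D^{\log(2)/W(\log(D))}$, so that $2^{g-1} < \tfrac12\,D^{\log(2)/W(\log(D))}$.

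Combining the two bounds completes the argument:
\[
p(-D) = \frac{h(-D)}{2^{g-1}} > \frac{(0.655/\pi)\,\varepsilon\,D^{1/2-\varepsilon}}{\tfrac12\,D^{\log(2)/W(\log(D))}} = \frac{1.31}{\pi}\,\varepsilon\,D^{1/2-\varepsilon-\log(2)/W(\log(D))},
\]
the factor $\tfrac12$ doubling Tatuzawa's constant to $1.31$, and the single exceptional discriminant being inherited directly from Tatuzawa's theorem. I expect no serious difficulty beyond the one step that requires genuine care, namely the Lambert-$W$ inversion of Lemma~\ref{Lemma1}: one must check the direction of the monotonicity and apply $\exp(W(y)) = y/W(y)$ correctly, so that the transcendental inequality $g\log(g)<\log(D)$ becomes the clean power-of-$D$ bound on $2^{g-1}$ that produces the exponent $-\log(2)/W(\log(D))$.
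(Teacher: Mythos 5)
Your proposal is correct and follows essentially the same route as the paper: Tatuzawa's bound on $L(1,\chi_{-D})$ combined with the class number formula, the genus-theory identity $p(-D)=2h(-D)/2^{g}$, and the inversion of Lemma~\ref{Lemma1} via the Lambert $W$-function to get $g<\log(D)/W(\log(D))$ and hence $2^{-g}>D^{-\log(2)/W(\log(D))}$. The only cosmetic difference is that you invert $g\log(g)<\log(D)$ by monotonicity of $t\mapsto t\log(t)$, while the paper applies the increasing function $W$ directly to $\log(D)>\exp(\log(g))\log(g)$; these are the same step.
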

\begin{proof}
Tatuzawa's theorem \cite{T}, says that with at most one exception
\begin{equation}\label{Eq:tatuzawa}
\frac{\pi\cdot h(-D)}{ \sqrt{D}}=L(1,\chi_{-D})>.655\varepsilon D^{-\varepsilon},
\end{equation}
thus
\[
p(-D)=\frac{2h(-D)}{2^g}>\frac{1.31\varepsilon\cdot D^{1/2-\varepsilon}}{\pi \cdot 2^g}.
\]

The relation $\log(D)>g\log(g)$ is equivalent to
\begin{gather*}
\log(D)>\exp(\log(g))\log(g),\\
\intertext{Thus applying the increasing function $W$ gives, by definition of $W$}
W(\log(D))>\log(g),\\
\intertext{and applying the exponential gives}
\exp(W(\log(D))>g.
\end{gather*}
The left hand side above is equal to $\log(D)/W(\log(D))$ by the definition of $W$.  Thus
\begin{gather*}
-\log(D)/W(\log(D)) < -g,\\
D^{-\log(2)/W(\log(D))}=2^{-\log(D)/W(\log(D))} < 2^{-g},
\end{gather*}
and the Theorem follows.
\end{proof}
\begin{remark}

 Our estimate arises from the bound $\log(D)>g\log(g)$, which is nearly optimal.  That is, for every $g$, there exists a fundamental discriminant (although not necessarily negative) of the form
\[
D_g\overset{\text{def.}}=\pm 3\cdot4\cdot5\cdot7\dots p_g,
\]
and
\[
\log|D_g| = \theta(p_g)+\log(2).
\]
From the Prime Number Theorem we know $\theta(p_g)\sim p_g$, so
\[
\log|D_g| \sim p_g+\log(2)
\]
while \cite[3.13]{RS} shows $p_g<g(\log(g)+\log(\log(g))$ for $g\ge6$.
\end{remark}

\section*{Second version}

\begin{theorem}\label{Theorem1}
Let $n\ge 4$ be any natural number. If $0<\varepsilon<1/2$ and $D>\max(\exp(1/\varepsilon),\exp(11.2))$, then with at most one exception
\[
p(-D)>\frac{1.31\varepsilon}{\pi}\cdot \frac{D^{1/2-\varepsilon-1/n}}{f(n)},
\]
where
\[f(n) = \exp\big[ ( \pi(2^n) - 1/n ) \log 2 - \theta(2^n)/n\big];
\]
here $\pi$ is the prime counting function and   $\theta$ is the Chebyshev function.

\end{theorem}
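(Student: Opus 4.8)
The plan is to begin exactly where the proof of Theorem \ref{Theorem2} begins. Tatuzawa's bound \eqref{Eq:tatuzawa} gives, with at most one exception,
\[
p(-D)=\frac{2h(-D)}{2^g}>\frac{1.31\varepsilon}{\pi}\cdot D^{1/2-\varepsilon}\cdot 2^{-g}.
\]
Comparing this with the asserted bound, it suffices to prove the purely elementary inequality
\[
2^{-g}\,D^{1/n}\ge \frac{1}{f(n)},\qquad\text{equivalently}\qquad 2^{g}\le f(n)\,D^{1/n}.
\]
Thus the whole theorem reduces to bounding $2^g$ by $D^{1/n}$, and $f(n)$ is precisely the constant that makes this succeed.

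Next I would write $D=q_1\cdots q_g$ as a product of primary discriminants, as in Lemma \ref{Lemma1}, and rewrite the target multiplicatively as
\[
\frac{D^{1/n}}{2^{g}}=\prod_{i=1}^{g}\frac{q_i^{1/n}}{2}.
\]
The key idea is to split these factors at the threshold $2^n$. If $q_i>2^n$ then $q_i^{1/n}>2$, so its factor exceeds $1$ and may be dropped from a lower bound: intuitively each large prime already "pays for" its own factor of $2$. Hence
\[
\frac{D^{1/n}}{2^g}\ge \prod_{q_i\le 2^n}\frac{q_i^{1/n}}{2},
\]
and every surviving factor has $q_i\le 2^n$, so $q_i^{1/n}/2\le 1$.

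I would then minimize this small-prime product over all admissible $D$. Since each factor is $\le 1$, adjoining more small primes only decreases the product, so the contribution of the odd small primes dividing $D$ is at least $\prod_{\text{odd }p\le 2^n} p^{1/n}/2$. For the $2$-part, the factor contributed by $D$ is $\ge 4^{1/n}/2$ in every case: it equals $4^{1/n}/2$ when the even primary discriminant is $-4$, equals $8^{1/n}/2\ge 4^{1/n}/2$ when it is $\pm 8$, and equals $1\ge 4^{1/n}/2$ when $D$ is odd. Multiplying gives
\[
\frac{D^{1/n}}{2^g}\ge \frac{4^{1/n}}{2}\prod_{\text{odd }p\le 2^n}\frac{p^{1/n}}{2}.
\]
Finally I would check that the right-hand side is exactly $1/f(n)$: separating the factor $2^{1/n}/2$ for $p=2$ and using $\theta(2^n)=\sum_{p\le 2^n}\log p$ together with $\pi(2^n)=\#\{p\le 2^n\}$ turns the product over all primes $\le 2^n$ into $\exp(\theta(2^n)/n)\,2^{-\pi(2^n)}$, and the bookkeeping collapses to $1/f(n)=(4^{1/n}/2)\prod_{\text{odd }p\le 2^n} p^{1/n}/2$.

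I expect the one genuinely delicate point to be this last identification, matching the worst-case small-prime product to the definition of $f(n)$, and in particular handling the $2$-part correctly; the factor $4^{1/n}/2=2^{2/n-1}$ is exactly what reconciles the count $\pi(2^n)$ with the extra $2^{1/n}$ appearing in $f(n)$. Once the threshold $2^n$ is chosen, the prime-splitting and the monotonicity argument for the omitted small primes are routine.
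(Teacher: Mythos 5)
Your proof is correct and follows essentially the same route as the paper's: both reduce via Tatuzawa's bound to showing $2^g\le f(n)\,D^{1/n}$, split the primary-discriminant factors at the threshold $2^n$, discard the large factors (each satisfies $q^{1/n}/2\ge 1$), and pad the remaining small factors out to the full set of primes below $2^n$ (each contributing a factor $\le 1$), which is exactly $1/f(n)$. The only cosmetic difference is that you handle the case $D\equiv 0\pmod 8$ inline via $8^{1/n}/2\ge 4^{1/n}/2$, whereas the paper disposes of it by applying the argument to $D/2$.
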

\begin{proof}
First observe
\[
f(n) = \frac{2^{\pi(2^n)} }{2^{1/n}\prod_{\text{primes }p<2^n}p^{1/n}}.
\]
 
From Tatuzawa's Theorem (\ref{Eq:tatuzawa}), it suffices to show $2^g \leq f(n)D^{1/n}$.   Suppose first that $D$ is not $\equiv0\pmod8$.

Let $S = \{ 4, \: \text{odd primes} < 2^n\}$, so $|S| = \pi(2^n)$.  Factor $D$ as $q_1\cdots q_g$ where $q_i$ are (absolute values) of coprime primary discriminants, that is, $4$ or odd primes, and satisfy $q_i<q_j$ for $i<j$. Then, for some $0\leq m\leq g$, we have $q_1,\dots, q_m \in S$ and $q_{m+1},\dots, q_g\not\in S$, and thus $2^n<q_i$ for $i=m+1,\dots, g$. This implies
\begin{align*}
    2^{gn} &= \underbrace{2^n\cdots 2^n}_{m} \cdot \underbrace{2^n\cdots 2^n}_{g-m}
            \le 2^{mn}\  q_{m+1} q_{m+2} \ldots  q_g \\
            &= \frac{ 2^{mn} }{ q_1\cdots q_m} D
            \leq \frac{2^{|S|\cdot n}}{ \prod_{q\in S} q}\cdot D \\
            \intertext{ as we have included in the denominator the remaining elements of $S$ (each of which is $\le2^n$).  The above is}
            &= \frac{2^{\pi(2^n) \cdot n}}{ 2 \prod_{\text{primes }p<2^n}p} \cdot D
            = f(n)^n\cdot  D.
\end{align*}
This proves the theorem when $D$ is not  $\equiv 0\bmod 8$.  In the remaining case, apply the above argument to $D^\prime=D/2$; so
\[
2^{gn}\le f(n)^nD^\prime <f(n)^n D.
\]
\end{proof}
\begin{examples}If $0<\varepsilon<1/2$ and $D>\max(\exp(1/\varepsilon),\exp(11.2))$, then with at most one exception, Theorem \ref{Theorem1} implies
\begin{gather*}
p(-D)>0.10199\cdot\varepsilon\cdot D^{1/4-\varepsilon}\quad (n=4)\\
p(-D)>0.0426\cdot\varepsilon\cdot D^{3/10-\varepsilon}\quad (n=5)\\
p(-D)>0.01249\cdot\varepsilon\cdot D^{1/3-\varepsilon}\quad (n=6)\\
p(-D)>0.00188\cdot\varepsilon\cdot D^{5/14-\varepsilon}\quad (n=7)\\
\end{gather*}
\end{examples}

\section*{Comparison of the two theorems} How do the two theorems compare?  Canceling the terms which are the same in both, we seek inequalities relating
\[
D^{-\log 2/W(\log D)} \quad\text{v.}\quad  \frac{D^{-1/n}}{f(n)}.
\]
\begin{theorem}\label{Theorem3}
For every $n$, there is a range of $D$ where the bound from Theorem \ref{Theorem1} is better than the bound from Theorem \ref{Theorem2}. However, for any fixed $n$ the bound from Theorem \ref{Theorem2} is eventually better as $D$ increases.
\end{theorem}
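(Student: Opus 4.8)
The plan is to reduce the comparison to a single-variable inequality. After cancelling the common factor $(1.31/\pi)\,\varepsilon\,D^{1/2-\varepsilon}$, the bound of Theorem \ref{Theorem1} beats that of Theorem \ref{Theorem2} exactly when $D^{-1/n}/f(n) > D^{-\log 2/W(\log D)}$. Set $L=\log D$ and $w=W(L)$. By the defining identity $w\exp(w)=L$ we have $\exp(W(L))=L/w$, so $D^{-\log 2/W(\log D)}=2^{-L/w}$, and taking logarithms shows the inequality above is equivalent to
\[
\log 2\cdot\exp(W(L)) > \frac{L}{n}+\log f(n).
\]
Substituting $u=\exp(W(L))$, which increases from $1$ to $\infty$ as $D$ does and satisfies $L=u\log u$, this becomes $h(u)>\log f(n)$, where $h(u)=u\bigl(\log 2-(\log u)/n\bigr)$. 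Everything now reduces to comparing the single function $h$ with the constant $\log f(n)$.

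For the second assertion, note that $\log 2-(\log u)/n<0$ once $u>2^n$, so $h(u)<0$ there, while $h(u)\to-\infty$ as $u\to\infty$. Since $\log f(n)>0$ is a fixed constant, $h(u)>\log f(n)$ fails for all large $u$. Translating back, $u>2^n$ corresponds to $W(\log D)>n\log 2$, i.e.\ to $D>2^{\,n2^{\,n}}$; beyond this explicit threshold Theorem \ref{Theorem2} gives the stronger bound. This direction is immediate.

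For the first assertion I would maximise $h$. From $h'(u)=\log 2-(\log u+1)/n$ the unique maximum is at $u^\ast=2^n/e$, with value $h(u^\ast)=2^n/(en)$. As $h$ is continuous, $\{u:h(u)>\log f(n)\}$ is a nonempty open interval—hence a genuine interval of $D$—precisely when
\[
\frac{2^n}{en}>\log f(n)=\pi(2^n)\log 2-\frac{\theta(2^n)}{n}-\frac{\log 2}{n}.
\]
This is the crux. Multiplying by $n$ and using the partial summation identity $\pi(x)\log x-\theta(x)=\int_2^x \pi(t)\,t^{-1}\,dt$ at $x=2^n$, it reads $\int_2^{2^n}\pi(t)\,t^{-1}\,dt<2^n/e+\log 2$. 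By the prime number theorem the left side is asymptotic to $2^n/(n\log 2)$, which lies below $2^n/e$ once $n\log 2>e$, i.e.\ for $n\ge 4$; I would make this rigorous for large $n$ via the explicit Rosser--Schoenfeld estimates of \cite{RS} on $\pi$ and $\theta$, and settle the finitely many remaining cases $4\le n\le n_0$ by directly computing $\pi(2^n)$ and $\theta(2^n)$.

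The main obstacle is precisely this inequality at the smallest admissible value $n=4$, where it is delicate: there $2^n/(en)=4/e\approx 1.472$ while $\log f(4)\approx 1.408$, so the margin is tiny and the crude explicit bounds (which only yield $n\ge 5$) do not reach it—the case $n=4$ must be verified numerically. A minor bookkeeping point remains: the favourable interval of $u$ lies around $u^\ast=2^n/e$, so the corresponding values of $D$ cluster near $\exp(u^\ast\log u^\ast)$; one checks that for every $n\ge 4$ the upper portion of this interval still satisfies $D>\exp(11.2)$, so a legitimate range of admissible $D$ survives.
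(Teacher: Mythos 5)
Your argument is correct and follows the same overall strategy as the paper: reduce the comparison to a single concave function of one variable measured against the constant $\log f(n)$, locate its maximum, and verify the resulting inequality asymptotically for large $n$ and numerically for the few small $n$. The execution differs in two ways that are worth recording. The paper works directly with $g(n,x)=x\,(\log 2/W(x)-1/n)$ in the variable $x=\log D$, calls the exact maximizer ``slightly ugly,'' evaluates instead at the nearby point $x_0=2^n n\log 2/e$, and only obtains $g(n,x_0)\sim 2^n/(en)$ via $W(x)\sim\log x-\log\log x$ (with the details relegated to a footnote); your substitution $u=\exp(W(\log D))$ eliminates the Lambert function entirely and yields the \emph{exact} maximum value $2^n/(en)$ at $u^{*}=2^n/e$, which is cleaner and removes the asymptotic step. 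Similarly, the paper bounds $\pi(2^n)$ and $\theta(2^n)$ separately via Rosser--Schoenfeld to get $\log f(n)<\frac{61}{40\log 2}\cdot\frac{2^n}{n^2}$ asymptotically (sufficient for $n\ge 6$), whereas your partial-summation identity $n\log f(n)=\int_2^{2^n}\pi(t)\,t^{-1}\,dt-\log 2$ packages the same quantity more transparently; the cost is that your leading-order comparison ($e$ versus $n\log 2$) is tighter than the paper's, so, as you correctly diagnose, $n=4$ must be settled numerically --- and your numbers $4/e\approx 1.472>1.408\approx\log f(4)$ are right, as is the easy decay argument for the second assertion. Your closing remark about checking that the favourable interval meets the admissibility threshold $D>\exp(11.2)$ is a legitimate bookkeeping point that the paper's proof does not address; for $n=4$ the set where $h(u)>\log f(4)$ reaches up to $u\approx 7.6$, i.e.\ $D\approx e^{15.5}$, so the required overlap does exist, and it only improves as $n$ grows.
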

For fixed $n$, the first statement of Theorem \ref{Theorem3} is equivalent to proving
\[
D^{\log(2)/W(\log(D)) - 1/n}\ge f(n)
\]
on a non-empty compact interval of the $D$ axis.  Taking logarithms, it suffices to show,
\begin{lemma}\label{T:compare}
Let $n\ge 4$. Then
\[
        x\bigg( \frac{\log 2}{W(x)} -\frac{1}{n}\bigg) \geq \log f(n)
\]
on some non-empty compact interval of positive real numbers $x$.
\end{lemma}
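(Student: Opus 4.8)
The plan is to make the change of variables $w = W(x)$ and reduce this transcendental inequality to an elementary one. First I would use the defining identity $W(x)\exp(W(x)) = x$, which gives $x/W(x) = \exp(W(x))$, to rewrite the left-hand side as
\[
x\left(\frac{\log 2}{W(x)} - \frac1n\right) = \log 2 \cdot \exp(W(x)) - \frac{x}{n}.
\]
Setting $w = W(x)$, so that $x = w\exp(w)$, this becomes $h(w) := \exp(w)\,(\log 2 - w/n)$. Since $W$ is a continuous increasing bijection of $(0,\infty)$ onto itself with inverse $w \mapsto w\exp(w)$, it suffices to exhibit a non-empty compact interval of positive $w$ on which $h(w) \geq \log f(n)$; its image under $w \mapsto w\exp(w)$ is then the desired compact interval in $x$.

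Next I would analyze $h$ as an ordinary one-variable function. It is smooth, satisfies $h(0) = \log 2 > 0$ and $h(w) \to -\infty$ as $w \to \infty$, and $h'(w) = \exp(w)\,(\log 2 - (w+1)/n)$ has a single zero at $w^{*} = n\log 2 - 1$, which is positive for $n \geq 4$ and is a global maximum. A direct computation gives the maximum value
\[
h(w^{*}) = \frac{2^{n}}{e\,n}.
\]
Because $h$ increases on $[0,w^{*}]$ and decreases on $[w^{*},\infty)$ down to $-\infty$, the superlevel set $\{w > 0 : h(w) \geq \log f(n)\}$ is a closed bounded interval, and it is non-empty and non-degenerate precisely when $h(w^{*}) > \log f(n)$. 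Thus everything reduces to the single numerical inequality $2^{n}/(e\,n) > \log f(n)$.

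The main content, and the main obstacle, is this last inequality, which is genuinely tight for small $n$. Writing $\log f(n) = \pi(2^{n})\log 2 - \theta(2^{n})/n - (\log 2)/n$, the point is that the two leading terms nearly cancel: by the Prime Number Theorem both $\pi(2^{n})\log 2$ and $\theta(2^{n})/n$ are asymptotic to $2^{n}/n$, so $\log f(n)$ is of the smaller order $2^{n}/n^{2}$, which is eventually dwarfed by $2^{n}/(e\,n)$. To make this effective I would use the Rosser--Schoenfeld bounds already invoked in Lemma \ref{Lemma1}, namely $\pi(x) < \frac{x}{\log x}\bigl(1 + \frac{3}{2\log x}\bigr)$ and $\theta(x) > x\bigl(1 - 1/\log x\bigr)$, applied at $x = 2^{n}$. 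These yield
\[
\log f(n) < \frac{5\cdot 2^{n}}{2\,n^{2}\log 2},
\]
and the right-hand side is less than $2^{n}/(e\,n)$ as soon as $n > 5e/(2\log 2) \approx 9.8$, i.e. for $n \geq 10$. The estimates are not sharp enough for the smallest exponents (at $n=4$ the two sides of $2^{n}/(e\,n) > \log f(n)$ differ by only about $0.06$), so the remaining cases $4 \leq n \leq 9$ I would dispatch by direct computation of $\pi(2^{n})$ and $\theta(2^{n})$. This finite check is where all the tightness lives and is the crux of the argument.
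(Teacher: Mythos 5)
Your proof is correct and follows the same overall strategy as the paper's: view the left-hand side as a one-variable function, locate its unique maximum, and reduce the lemma to the single inequality ``maximum value $> \log f(n)$,'' which is then settled by Rosser--Schoenfeld bounds for large $n$ and direct computation for small $n$. The one genuine difference is in how the maximum is handled. The paper works directly with $g(n,x) = x(\log 2/W(x) - 1/n)$, evaluates it at the convenient nearby point $x_0 = 2^n n\log 2/e$ rather than at the true maximum $2^n(n\log 2 - 1)/e$, and extracts $g(n,x_0)\sim 2^n/(en)$ from the asymptotic $W(x)\sim\log x-\log\log x$, with a footnote conceding that the asymptotic details are omitted. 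Your substitution $w = W(x)$ turns the left-hand side into $e^{w}(\log 2 - w/n)$ and yields the \emph{exact} maximum value $2^n/(en)$ at $w^* = n\log 2 - 1$ (whose preimage $w^*e^{w^*}$ is precisely the paper's critical point), so your comparison $2^n/(en) > \log f(n)$ is a genuine finite-$n$ inequality rather than a comparison of asymptotic expressions; this is a real gain in rigor. The price is a slightly weaker explicit bound $\log f(n) < 5\cdot 2^n/(2n^2\log 2)$ (versus the paper's $\tfrac{61}{40\log 2}\cdot 2^n/n^2$), pushing the analytic threshold from $n\ge 6$ to $n\ge 10$ and enlarging the finite verification to $4\le n\le 9$; that check is routine (e.g.\ at $n=4$ one has $2^4/(4e)\approx 1.47$ against $\log f(4)\approx 1.41$, consistent with your margin of about $0.06$). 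One small caveat: the $\pi(x)<\frac{x}{\log x}(1+\frac{3}{2\log x})$ bound is Rosser--Schoenfeld (3.2), not something invoked in Lemma \ref{Lemma1}, but it is valid for all $x>1$, so nothing is lost.
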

\begin{proof}
Let $g(n,x) = x\, ( \log 2/W(x) - 1/n )$. Then
\[
\frac{\partial g}{\partial x} = \frac{\log 2}{W(x)+1} - \frac1n \qquad \text{and}\qquad \frac{\partial^2 g}{\partial x^2} = \frac{-\log 2 \cdot W(x)}{ x( W(x)+1)^3}.
\]
This shows $g$ is concave down on the positive real numbers and has a maximum at
\[
x = 2^n (n\log 2-1)/e.
\]
Because of the concavity, all we need to do is show that $g(n,x)>\log f(n)$ at \emph{some} $x$.  The maximum point is slightly ugly so instead we
let $x_0 = 2^n n\log 2/e$.
%Then it suffices to show $g(n,x_0)\geq f(n)$ for all $n>4$.
%(?? check lower bound on $n$!!)

Using $W(x) \sim \log x - \log\log x$, a short calculation shows
\[
        g(n,x_0) \sim \frac{1}{e}\cdot\frac{2^n}{n}.
\]

  By \cite[5.7)]{RS2}, a lower bound on Chebyshev's function is
\[
   \theta(t)> t\bigg( 1- \frac{1}{40 \log t}\bigg) , \quad t>678407.
\]
(Since we will take $t=2^n$ this requires $n>19$ which is not much of a restriction.)\ \
By \cite[(3.4)]{RS}, an upper bound on the prime counting function is
\[
\pi(t) < \frac{t}{\log t -3/2}, \quad t>e^{3/2}.
\]

Hence $-\theta(2^n)< 2^n\, (1/(40 n\log 2) -1)$ and so
\begin{align*}
\log f(n) &= \bigg(\pi(2^n)-\frac1n\bigg)\log 2 - \frac{\theta(2^n)}{n} \\
            &< \bigg( \frac{2^n}{n\log 2 - 3/2} -\frac1n\bigg)\log2 + \frac{2^n}{n} \bigg(\frac{1}{40 n\log2} -1\bigg)\\
            &\sim  \frac{61}{40\log2}\cdot\frac{2^n}{n^2}.
\end{align*}

Comparing the two asymptotic bounds for $g$ and $\log f$ respectively we see that
\[
 \frac{1}{e}\cdot\frac{2^n}{n} >   \frac{61}{40\log2}\cdot\frac{2^n}{n^2},
\]
 for $n\ge 6$; small $n$ are treated by direct computation.\footnote{The details of the asymptotics have been omitted for conciseness.}
\end{proof}

\begin{figure}
\begin{center}
\includegraphics[scale=1, viewport=0 0 400 200,clip]{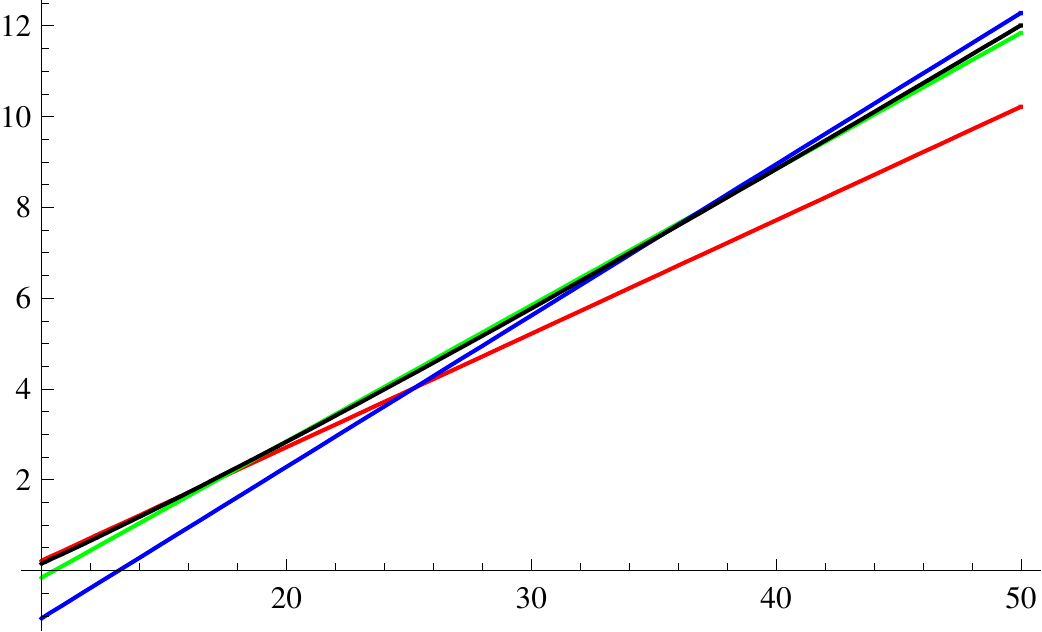}
\caption{$\log$-$\log$ plots of the bounds from Theorems \ref{Theorem2} and \ref{Theorem1}}\label{F:pic}
\end{center}
\end{figure}

Figure \ref{F:pic} shows a $\log$-$\log$ plot of the two lower bounds, omitting the contribution of the constants which are the same in both and the terms involving $\varepsilon$.  That is, Theorem \ref{Theorem1} gives  for each $n$ a lower bound  $b(D)$ of the form
\begin{gather*}
b(D)=C(n) \varepsilon D^{1/2-1/n-\varepsilon},\quad\text{so}\\
\log(b(D)) = (1/2-1/n-\varepsilon)\log(D)+\log(C(n))+\log(\varepsilon).
\end{gather*}
Observe that for fixed $n$ and $\varepsilon$, this is linear in $\log(D)$, with the slope an increasing function of the parameter $n$.
What is plotted is actually   $(1/2-1/n)\log(D)+\log(C(n))$ as a function of $\log(D)$, and analogously for Theorem \ref{Theorem2}.\ \
In red, green, and blue are plotted the lower bounds from Theorem \ref{Theorem1} for $n=4$, $5$, and $6$ respectively.    In black is plotted the lower bound from Theorem \ref{Theorem2}.

\begin{examples}The choice $\varepsilon=1/\log(5.6\cdot 10^{10})$ in Theorem \ref{Theorem2} shows that  $p(-D)>1$ for $D>5.6\cdot 10^{10}$ with at most one exception.  (For comparison, Weinberger \cite[Lemma 4]{W} needed $D>2\cdot 10^{11}$ to get this lower bound.)
And, $\varepsilon=1/\log(3.5\cdot 10^{14})$ in Theorem \ref{Theorem2} gives $p(-D)>10$ for $D>3.5\cdot 10^{14}$ with at most one exception.  Finally,  $n=6$ and $\varepsilon=1/\log(4.8\cdot 10^{17})$ in Theorem \ref{Theorem1} gives $p(-D)>100$ for $D>4.8\cdot 10^{17}$ with at most one exception.
\end{examples}

\end{document}